\documentclass[12pt, leqno]{amsart}

\usepackage{setspace}
\usepackage{graphicx}
\usepackage{amsfonts, amsmath, oldgerm, amssymb, amscd, bbm, amsthm}
\usepackage{dsfont}
\usepackage[T1]{fontenc}



\newtheorem{theorem}{Theorem}[section]
\newtheorem{lemma}[theorem]{Lemma}

\newtheorem{corollary}[theorem]{Corollary}

\newcommand{\Z}{\mbox{$\mathbb Z$}}
\newcommand{\C}{\mbox{$\mathbb C$}}
\newcommand{\Q}{\mbox{$\mathbb Q$}}
\newcommand{\A}{\mbox{$\mathbb A$}}
\newcommand{\ol}{\overline}
\newcommand{\p}{\mbox{$\mathbb P$}}
\newcommand{\sk}{\ol \kappa}
\newcommand{\s}{\widetilde{S}}


\setlength\parindent{0em}
\title{$\Q$-Homology Planes pairs with $\sk=1$}
\author{Sagar Kolte}

\begin{document}
\let\thefootnote\relax\footnote{Mathematics Subject Classification: 14J26, 14R25}

\begin{abstract} 
A pair $(S,C)$ is called a singular $\Q$-homology plane pair if  $S$ is a singular projective surface with only quotient singularities having the same rational homology as $\p^2$ 
and $C \subset S$ has the same rational homology as $\p^1$. We will prove results concerning smooth rational curves on $S$ and the singularities of $S$. Such that $\sk(S^0)=1$ and $\sk(S-C) \neq -\infty$. We end with an example of such pairs.
\end{abstract}

\maketitle
\section{Introduction}
In this paper all varieties will be over $\C$, the field of complex numbers. By $\sk$ we denote the logarithmic Kodaira dimension (\cite{iitaka82}[Chapter 11]) of a smooth open algebraic surface.\\

Let $S$ be a projective surface with only quotient singularities such that $H_i(S, \Q)=H_i(\C\p^2, \Q)$ for all $i \in \Z$. Let $S^0$ be the smooth locus of $S$. Let $C$ be a rational curve in $S^0$ with only cuspidal singularities. It is easy to see that $S-C$ is an $\A^2-\Q$-Homology Plane. The pair $(S,C)$ is called a $\Q$-homology plane pair.\\

\begin{theorem}
If $\ol \kappa(S^0-C)=1$ and $\ol \kappa(S-C) \neq -\infty$ then $C$ has at most two cusps and there is a smooth rational curve $\theta \subset S^0$ passing through the cusps of $C$. 
\end{theorem}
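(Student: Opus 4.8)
The plan is to turn the hypothesis $\sk(S^0-C)=1$ into a $\C^{*}$-fibration of $S^0-C$ and then to read the cusps of $C$ off the ramification of this fibration. I would first fix a resolution $\rho\colon V\to S$ resolving the quotient singularities of $S$ and the cusps of $C$ simultaneously, with total transform a simple normal crossing divisor. Write $C'$ for the strict transform of $C$ (smooth and rational, since $C$ is unibranch at each cusp), $E_C$ for the exceptional locus over the cusps, and $E_S$ for that over $\mathrm{Sing}(S)$. Then $V-(C'\cup E_C\cup E_S)\cong S^0-C$ while $V-(C'\cup E_C)$ is a resolution of $S-C$; deleting the extra divisor $E_S$ cannot lower the log Kodaira dimension, so $\sk(S-C)\le\sk(S^0-C)=1$, and with $\sk(S-C)\neq-\infty$ we are confined to $\sk(S-C)\in\{0,1\}$.

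Since $S-C$ is an $\A^2$-$\Q$-homology plane it is affine, so neither $S-C$ nor $S^0-C$ contains a complete curve. Hence the general fibre of the Iitaka fibration attached to $\sk(S^0-C)=1$ is a non-complete rational curve of log Kodaira dimension $0$, i.e.\ a $\C^{*}$, and $S^0-C$ carries a $\C^{*}$-fibration $f\colon S^0-C\to B$ extending to a $\p^1$-fibration $\bar f\colon V\to\bar B$ over a rational base. Here I would use $\sk(S-C)\neq-\infty$ to guarantee that, once the singular points of $S$ are restored (that is, on $V-(C'\cup E_C)$), the fibration does not degenerate to an $\A^1$-fibration; this pins $E_C$ and $E_S$ to vertical fibre components.

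The crux is that $C'$ is then the whole horizontal part of the boundary. A general fibre $F$ of $\bar f$ is a $\p^1$ meeting $D:=C'\cup E_C\cup E_S$ in exactly two points, and since $E_C,E_S$ are vertical both points lie on $C'$; thus $C'\cdot F=2$ and $C'$ is a $2$-section. The induced double cover $C'\cong\p^1\to\bar B\cong\p^1$ has, by Riemann--Hurwitz, exactly two branch points. Each cusp of $C$ forces $C'$ to be tangent to the fibre through it and hence to lie over such a branch point, with at most one cusp over each; therefore $C$ has at most two cusps.

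For the curve $\theta$ I would use that the cusps sit at the ramification points $r_1,r_2$ of this double cover, lying in distinct fibres $F_1,F_2$ with cuspidal tangent along the fibre. On the rational $\p^1$-fibred surface $V$ one can choose a section of $\bar f$ through $r_1$ and $r_2$; taking it general elsewhere so that it avoids $E_S$ and meets the cuspidal chains transversally, its image $\theta\subset S^0$ is a smooth rational curve through the cusps. The hard part will be exactly this construction, together with the vertical-fibre bookkeeping: one must describe the singular fibres $F_1,F_2$ and the cuspidal resolution chains inside them precisely enough to know that such a section is smooth, disjoint from $\mathrm{Sing}(S)$, and passes through each cusp rather than an adjacent exceptional component --- and it is here that $\sk(S-C)\neq-\infty$ must be used quantitatively, to exclude fibre configurations that would create a third cusp or obstruct $\theta$.
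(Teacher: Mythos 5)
Your reduction of the hypothesis to a $\C^{*}$-fibration on $S^0-C$ (via Kawamata) and your argument that $E_S$ is vertical (a horizontal component of $E_S$ would complete infinitely many fibers into contractible curves of $S-C$, forcing $\sk(S-C)=-\infty$) agree with the paper. But the step on which everything else in your proof rests --- that $E_C$ is also vertical, so that $C'$ is a $2$-section and each cusp becomes a ramification point of a double cover $C'\to\p^1$ --- is a genuine gap, and in fact it is false under the stated hypotheses. The contractible-curve argument does not transfer to $E_C$: its components map to the cusps of $C$, which lie \emph{on} $C$ and hence are not restored when one passes from $S^0-C$ to $S-C$, so a horizontal component of $E_C$ does not complete any fiber and yields no contradiction. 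Worse, the actual configuration is the reverse of your Ansatz: the paper shows (Lemma 3.1) that the $(-1)$-curve of branching number three created by resolving each cusp \emph{cannot} lie in a fiber --- a reduced $(-1)$-curve in a singular fiber of a $\p^1$-fibration meets exactly one other fiber component, so it cannot retain its branching, and pursuing this forces a loop in the tree $D$ --- hence these cusp curves are horizontal, and since the boundary has at most two horizontal components, $C$ has at most two cusps. In particular $C'$ is vertical: in Lemma 3.4 it sits inside a singular fiber $F_1=C'\cup l_1\cup M_1$.

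Note also that your picture requires the fibration to be twisted (a unique horizontal component which is a $2$-section is exactly the twisted case), whereas the paper's Lemma 3.2 proves that for $0\le\sk(\s-D-E)<2$ every $\C^{*}$-fibration on $\s-D-E$ is untwisted; so the double cover and the Riemann--Hurwitz count never materialize. Consequently the construction of $\theta$ as ``a section through the two ramification points'' --- which you yourself flag as the hard part, and which is unjustified even inside your own setup, since a section of a $\p^1$-fibration through two prescribed points of prescribed fibers need not exist --- has no analogue in the correct configuration. In the paper, $\theta$ arises instead from the unique $(-1)$-curve $l_1$ in the singular fiber containing $C'$ (uni-cuspidal case), or, in the bi-cuspidal case, from a $(-1)$-curve $l$ giving a uni-cuspidal curve $L$ to which Corollary 2.3 is applied; both the cusp bound and the existence of $\theta$ flow from an analysis in which the cusp $(-1)$-curves are horizontal and $C'$ is vertical, the opposite of what you assumed.
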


As a consequence of this we have:\\

\begin{corollary}
The following hold:
\begin{enumerate}
\item $S-\theta$ is a $\Z$-homology plane, 
\item $S$ has at most one singularity,
\item If $\pi : S' \rightarrow S$ is a resolution of the singularity of $S$ and $E=\pi^{-1}(Sing(S)) \subset S'$, then $E$ is an irreducible rational curve.
\end{enumerate}
\end{corollary}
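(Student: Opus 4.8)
The plan is to derive all three statements from a homological analysis of $S$ together with the explicit curve configuration produced in the proof of the Theorem, working throughout with integral (co)homology, since the real force of the corollary is to upgrade the rational hypotheses ($S$ a $\Q$-homology $\p^2$, $S-C$ a $\Q$-homology plane) to integral conclusions. The central tool is the Gysin/Thom long exact sequence for the smooth rational curve $\theta\subset S^0$: because a tubular neighborhood of $\theta$ lies entirely in the smooth locus, excision gives $H_k(S,S-\theta)\cong H_{k-2}(\theta)$ regardless of the singularities of $S$ elsewhere, so that
\[
\cdots \to H_k(S-\theta)\to H_k(S)\xrightarrow{\,\cdot\,\theta\,} H_{k-2}(\theta)\to H_{k-1}(S-\theta)\to\cdots
\]
with the middle arrow the intersection with $[\theta]$.

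For part (1) I would read off from this sequence that $S-\theta$ is a $\Z$-homology plane provided that (a) $H_1(S,\Z)=0$ and $H_3(S,\Z)=0$, and (b) the class $[\theta]$ generates $H_2(S,\Z)$ modulo torsion with the self-intersection $\theta^2$ making the Gysin maps $H_2(S)\to H_0(\theta)=\Z$ and $H_4(S)\to H_2(\theta)=\Z$ isomorphisms. Conditions (a) and (b) are exactly where the geometry from the Theorem enters: since $\theta$ is smooth rational and generates $H_2(S,\Q)=\Q$, and since $S-C$ is a $\Q$-homology plane (hence has finite, controlled $H_1$), I would show $[\theta]$ is primitive and that the relevant torsion is killed. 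Pinning down $\theta^2$ and the divisor-class relation between $\theta$ and $C$ will use the incidence of $\theta$ with the cusps of $C$ furnished by the Theorem.

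For parts (2) and (3) I would pass to the minimal resolution $\pi:S'\to S$, with $E=\pi^{-1}(\mathrm{Sing}\,S)$ a disjoint union of trees of rational curves (quotient singularities), and exploit the numerical constraints $\rho(S)=b_2(S)=1$, so that $\rho(S')=1+\#\{\text{components of }E\}$, together with the Euler-characteristic identity $e(S')=e(S)+\sum_i\big(e(E_i)-1\big)=3+\sum_i m_i$, where $m_i$ is the number of components over the $i$-th singular point. Combining part (1) — that $S-\theta$, and hence its resolution $S'-\theta$, has prescribed integral homology controlled by $E$ and by the SNC boundary divisor produced in the Theorem — with this bookkeeping, I would count independent classes in $H_2(S')$ and show that a second singular point, or a reducible exceptional divisor, would force $H_1$ or $H_2$ classes in $S-\theta$ incompatible with its being a $\Z$-homology plane. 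This yields a single singular point (part 2) with irreducible exceptional locus (part 3), the latter identifying the singularity as the cyclic quotient whose minimal resolution is one rational curve.

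The main obstacle I anticipate is the integral/torsion bookkeeping: the local fundamental groups of the quotient singularities can a priori contribute torsion to $H_1$ and $H_2$, and the entire content of the corollary is that $\theta$ absorbs all the homology of $S$ integrally. Controlling this requires the precise divisor-theoretic relationship among $\theta$, $C$, and the exceptional curves coming out of the Theorem's proof, rather than only the rational Betti-number information; getting $\theta^2$ and the primitivity of $[\theta]$ exactly right is what makes the Gysin maps unimodular and, at the same time, bounds the singularities.
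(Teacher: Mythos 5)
Your strategy runs in the opposite direction from the paper's, and both of your reductions have genuine gaps precisely at the points where geometric input is required. For part (1), the Gysin sequence is a reasonable skeleton, but it only re-expresses the problem: exactness gives a short exact sequence $0 \to \mathrm{coker}\bigl(H_2(S,\Z)\xrightarrow{\,\cdot\,\theta\,}\Z\bigr) \to H_1(S-\theta,\Z)\to H_1(S,\Z)\to 0$, so you need (i) $H_1(S,\Z)=0$ and (ii) an integral class meeting $\theta$ exactly once. Neither follows from the rational hypotheses by bookkeeping: a $\Q$-homology $\p^2$ can carry torsion in $H_1$, and primitivity of $[\theta]$ is not a formal consequence of $[\theta]$ spanning $H_2(S,\Q)$. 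The paper supplies both inputs geometrically: the $\A^1$-fibration constructed in Lemma 3.5 (blow up on the smooth curve until its proper transform has self-intersection $0$) produces a unique $(-1)$-curve in the fiber $F_0$ whose image in $S$ is an affine line meeting $\theta$ transversally in one point; this is what kills the loop around $\theta$, and triviality of $\pi_1(\s)$ is quoted from the rationality and simple connectivity results the paper cites. Your phrase ``the relevant torsion is killed'' has no mechanism behind it without these two facts.

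For parts (2) and (3) the gap is worse: they cannot be derived from part (1) by counting classes, because the exceptional curves are collapsed in $S-\theta$ and so its integral homology does not register the number of singular points. Collapsing the (simply connected) trees $E_i$ gives $H_1(S-\theta)\cong H_1(S'-\theta)$ and $H_2(S-\theta)\cong H_2(S'-\theta)/\langle [E_i]\rangle$, and these vanishing conditions are rank-consistent with any number of exceptional components: when $b_2(S)=1$ one has $\mathrm{rank}\, H_2(S'-\theta) = \mathrm{rank}\,\theta^{\perp} = b_2(S')-1$, which automatically equals the number of components of $E$, however many singular points there are. The failure of $\langle[E_i]\rangle$ to fill out $\theta^{\perp}$ is a saturation (finite-index) condition, not a Betti-number condition, so ``a second singular point would force extra $H_1$ or $H_2$ classes in $S-\theta$'' is false as stated. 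The paper instead proves (2)--(3) first, in Lemma 3.5, by pure fibration geometry: every connected component of $E$ must lie in the single fiber $F_0$ containing the $(-2)$-chain (otherwise another fiber would contain a $(-1)$-curve disjoint from $D$, which is impossible), and the $(-1)$-curve and branching analysis inside the tree $F_0$ then forces $E$ to be connected and irreducible. Note finally that the logical order matters: the paper needs Lemma 3.5 \emph{before} part (1), since the affine line used to prove (1) is produced inside $F_0$; your plan reverses the order and thereby forfeits the very tool that makes (1) provable.
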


It might be noted that the links of Quotient singular points of surfaces have a seifert fibration. It is of considerable interest(see \cite{kollar08}[Section 3]) to understand the H-cobordisms of seifert fibrations. Theorem 1.1 and Corollary 1.2 imply that if $\sk(S^0-C)=1$, then there is another curve $\theta$ such that $S-(p \cup \theta)$ is a H-cobordism of the link of the singularity $p$ of $S$.\\

The Corollary finds a more technical proof in another preprint by the author and R.V. Gurjar and D-S. Hwang. Theorem 1.1 substantially simplifies the proof and addresses the question of H-cobordisms mentioned above.

By a result of Y. Kawamata, the hypothesis $\ol \kappa(S^0-C)=1$ implies that $S^0-C$ admits a unique $\C^*$-fibration. This naturally leads us to consider $\C^*$-fibrations on $\Q$-homology planes. In this context the following result(Lemma 2.10, \cite{miya91}) is useful:

\begin{lemma}[Miyanishi-Sugie]
Let $X$ be an affine $\Q$-homology plane with a $\C^*$-fibration, $f:X \rightarrow B$. Then we have:
\begin{enumerate}
\item $B$ is either $\p^1$ or $\A^1$. If the fibration is twisted then $C \cong \A^1$.
\item If $B \cong \p^1$ then every fiber of $f$ is irreducible and there is exactly one fiber isomorphic to $A^1$.
\item if $B \cong A^1$ and $f$ is untwisted, all fibers are untwisted except one which consists of two irreducible components. If $\C \cong A^1$ and $f$ is twisted, all fibers are irreducible and there is exactly one fiber isomorphic to $\A^1$.
\end{enumerate}

\end{lemma}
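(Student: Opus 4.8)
The plan is to reduce the statement to the theory of $\p^1$-fibrations on a smooth projective completion, and then to extract the combinatorics of the degenerate fibres from the two homological constraints carried by an affine $\Q$-homology plane, namely $H_1(X,\Q)=0$ and $H_2(X,\Q)=0$ (together with $\chi_{\mathrm{top}}(X)=1$). First I would choose a smooth projective surface $V$ containing $X$ as a Zariski-open set with $D:=V\setminus X$ a reduced simple normal crossing divisor, and extend $f$ to a $\p^1$-fibration $\Phi:V\to\bar B$, where $\bar B$ is the smooth completion of $B$. The generic fibre of $\Phi$ is $\p^1$, and $X$ meets it in the complement of the horizontal part $H$ of $D$; since $f$ is a $\C^*$-fibration this complement is $\C^*$, so $H$ is a $2$-section of $\Phi$. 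The untwisted case is exactly the case where $H$ splits as two disjoint sections $H_1\sqcup H_2$, and the twisted case is the case where $H$ is a single irreducible curve mapping $2:1$ onto $\bar B$.

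Next I would pin down the base. Because the general fibre of $f$ is connected, the induced map $f_*:H_1(X,\Q)\to H_1(B,\Q)$ is surjective; as $H_1(X,\Q)=0$ this forces $H_1(B,\Q)=0$, and a smooth curve with vanishing first rational homology is either $\p^1$ or $\A^1$, giving the base part of (1) and also $\bar B=\p^1$. For the twisted claim I would untwist the fibration by base-changing along the quadratic cover of $B$ that splits the irreducible $2$-section $H$, and show by a Hurwitz/homology computation that if $B$ were all of $\p^1$ the resulting boundary configuration is incompatible with $H_1(X,\Q)=0$; this leaves $B\cong\A^1$ in the twisted case.

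I would then run an Euler characteristic count. Since $\chi_{\mathrm{top}}(\C^*)=0$ and $\chi_{\mathrm{top}}$ is additive over a constructible stratification and multiplicative in locally trivial fibrations, additivity over $f$ gives $\chi_{\mathrm{top}}(X)=\sum_b\chi_{\mathrm{top}}(f^{-1}(b))$, the sum running over the finitely many degenerate fibres. As $X$ is a $\Q$-homology plane, $\chi_{\mathrm{top}}(X)=1$. Each degenerate fibre $f^{-1}(b)=\Phi^{-1}(b)\cap X$ is a curve whose contribution I would read off from the finite list of degenerations of a $\C^*$-fibre: a reduced $\A^1$ contributes $1$, a reduced $\C^*$ contributes $0$, and every admissible degenerate configuration contributes a non-negative integer. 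The identity $\sum_b\chi_{\mathrm{top}}(f^{-1}(b))=1$ then says exactly one degenerate fibre carries the single unit of Euler characteristic, which is the ``$\A^1$-fibre'' of (2) and (3), while all remaining special fibres have Euler characteristic $0$.

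Finally, to control the number of irreducible components (the content of (2) and (3)) I would use $H_2(X,\Q)=0$: for the rational completion this forces $\operatorname{rank}H^2(V,\Q)$ to equal the number of irreducible components of $D$, since the classes of the $D_i$ then span $H^2(V,\Q)$ freely. On the other hand, for the $\p^1$-fibration over $\bar B=\p^1$ one has $\operatorname{rank}H^2(V,\Q)=2+\sum_b(\mu_b-1)$, where $\mu_b$ is the number of components of $\Phi^{-1}(b)$. Writing $D$ as its horizontal part ($2$ components if untwisted, $1$ if twisted) plus its vertical part, and equating the two expressions, I obtain a single combinatorial identity relating the total number of fibre components to the number lying in $D$; feeding in the Euler-characteristic conclusion and the fact that every fibre of $f$ over $B$ is non-empty (so each such fibre retains a component in $X$) leaves only the configurations listed in (1)--(3). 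I expect the genuine difficulty to lie in two linked places: justifying the non-negativity and exact list of admissible degenerate $\C^*$-fibres with their multiplicities, so that the unit of Euler characteristic is correctly located; and the twisted case, where irreducibility of the $2$-section must be converted, via the quadratic base change, into $B\cong\A^1$ and into the reduced-irreducible structure of the fibres. The remainder is bookkeeping with the two numerical identities above.
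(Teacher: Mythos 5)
The paper does not prove this statement: it is imported verbatim as Lemma~2.10 of \cite{miya91} (Miyanishi--Sugie), so there is no internal proof to compare yours against. Judged on its own, your proposal reconstructs what is essentially the standard argument from the literature, and its skeleton is sound: complete $f$ to a $\p^1$-fibration $\Phi\colon V\to \bar{B}$ with SNC boundary $D$, whose horizontal part is a $2$-section (two disjoint sections in the untwisted case, one irreducible $2$-section in the twisted case); deduce $B\in\{\p^1,\A^1\}$ from surjectivity of $\pi_1(X)\to\pi_1(B)$; and play the two counts against each other, namely $\chi_{\mathrm{top}}(X)=\sum_b\chi(f^{-1}(b))=1$ and $b_2(V)=\#\{\text{components of }D\}=2+\sum_b(\mu_b-1)$, where the first equality in the latter follows from $H_1(X,\Q)=H_2(X,\Q)=0$ via the compactly supported cohomology sequence of $(V,D)$.

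Two repairs are needed. First, the Hurwitz/base-change detour you propose for the twisted case is the vaguest step of your plan and is also unnecessary: your own counting identity disposes of it. Write $h\in\{1,2\}$ for the number of horizontal components of $D$ and $d_b$ for the number of components of $\Phi^{-1}(b)$ lying in $D$; since every fiber of $\Phi$ over a point of $B$ meets $X$, each term $\mu_b-1-d_b$ is $\geq 0$. If $B=\p^1$ no full fiber lies in $D$, so the identity reads $h+\sum_b d_b=2+\sum_b(\mu_b-1)$, i.e.\ $\sum_b(\mu_b-1-d_b)=2-h$; for $h=1$ (twisted) this equals $-1$, a contradiction, and for $h=2$ it forces every fiber of $f$ to be irreducible. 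The same bookkeeping over $B=\A^1$ (where the fiber at infinity lies entirely in $D$) yields $\sum_{b\in B}(\mu_b-1-d_b)=3-h$, giving exactly one two-component fiber when untwisted and all fibers irreducible when twisted. Second, the non-negativity of $\chi(f^{-1}(b))$, which you flag but leave open, is not pure bookkeeping: you must use that $X$ is affine, hence contains no complete curve, and that $D$ is connected (the boundary of an affine surface always is), which forces each connected component of the vertical part of $D$ inside a fiber $F$ to meet the horizontal part $H$. Since $H\cdot F=2$ and $D$ is SNC, at most two such components (or residual points of $H\cap F$) can occur, whence $\chi(f^{-1}(b))=(\mu_b+1)-\chi(F\cap D)\geq(\mu_b-d_b)-1\geq 0$, with $\chi=1$ exactly when the unique component in $X$ is an affine line. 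With these two emendations your outline closes up into a complete proof of all three assertions.
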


We setup some notation. $\pi: \tilde{S} \rightarrow S$ is a resolution of singularities of $S$ and $C$. Assume that $\pi^{-1}(C)$ is SNC-minimal and $\pi^{-1} ( Sing(S))$ is minimal. Let $D:= \pi^*(C)$ and $E:= \pi^{-1}(Sing(S))$.\\

In the rest of this work we will use the terminology developed by T. Fujita(see \cite{fujita82}[$\S$.3]) in describing divisors on a surface.\\

By hypothesis $C$ is a rational curve in $S^0$ with cuspidal singularities. If $C$ is uni-cuspidal then we have the following description (see \cite{orevkov02}) of $D$: $D= B \cup C' \cup T_2 \cup T_3$. Here $C'$ is the proper transform of $C$, $B$ is a branch of $D$ with branching number 3. The curves $C'$, $T_2$, $T_3$ are branches of $B$ where $T_2$ is a linear chain of rational curves, $T_3$ is a tree of rational curves such that the irreducible components of $T_3$ have branching number at most 3. Further $B \cup T_2 \cup T_3$ can be blown down to a smooth point. $B$ is the only curve in $D$ with self intersection -1.\\

\section{Uni-Cuspidal Rational Curves}
In this section we prove some results about uni-cuspidal rational curves such that $C' \cdot C' \geq 1$.

\begin{lemma}
If $C$ is uni-cuspidal and  $C' \cdot C'=1$ then $C$ is smooth.
\end{lemma}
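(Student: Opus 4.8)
I would argue by contradiction, assuming that the unique cuspidal point of $C$ is a genuine singularity. Then the embedded resolution of the cusp is nontrivial and, by the description of $D$ recalled above (see \cite{orevkov02}), $D = B \cup C' \cup T_2 \cup T_3$ with $B$ the unique $(-1)$-curve, of branching number $3$, and with $B \cup T_2 \cup T_3$ contractible to a smooth point. Writing $\sigma$ for this contraction and $m_0, m_1, \dots, m_{k-1}$ for the multiplicities of the successive proper transforms of $C$ at the centers blown up by $\sigma$, the standard change-of-self-intersection formula gives $C \cdot C = (C')^2 + \sum_{i} m_i^2$. Since the cusp is unibranch and singular we have $m_0 \geq 2$, and resolving the tangency of the first proper transform with the exceptional locus forces at least two further blow-ups; hence $\sum_i m_i^2 \geq 6$. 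With $(C')^2 = 1$ this yields $C \cdot C \geq 7$, and in particular $C \cdot C > 1$.

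The plan is then to contradict this lower bound using the global geometry. Because $S$ has the rational homology of $\p^2$, its Picard number is one, so $\mathrm{Num}(S)\otimes\Q = \Q\,[C]$ and the entire intersection theory of $S$ is governed by the single number $C\cdot C$. On $\s$ the classes of the components of $E$ and of $B \cup T_2 \cup T_3$ span negative definite lattices (the former because $E$ resolves quotient singularities, the latter because $B\cup T_2\cup T_3$ blows down to a smooth point), while $(C')^2 = 1 > 0$; by the Hodge index theorem $C'$ therefore spans the unique positive direction, and the reduced total boundary $\Delta := E + C' + B + T_2 + T_3$ has a nondegenerate, block-diagonal intersection form. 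The complement $\s - \Delta$ is isomorphic to $S^0 - C$, and the hypothesis that $S - C$ is an $\A^2$-$\Q$-homology plane controls the discriminant of this form: the determinant of the block attached to the cusp equals, up to sign, $C\cdot C$ times the determinant of the resolution lattice of $B\cup T_2\cup T_3$, and the latter is $\pm 1$ precisely because that configuration contracts to a smooth point.

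Matching this discriminant against the homology of $S - C$ is meant to force $C\cdot C = 1$, whence $\sum_i m_i^2 = 0$ and $k = 0$: the configuration $B\cup T_2\cup T_3$ is empty and $C = C'$ is smooth. The hard part will be exactly this matching step. Rational acyclicity of the complement is by itself too weak — it is already consistent with an ordinary cusp and $C\cdot C = 7$ — so the argument must exploit the finer $\A^2$-$\Q$-homology plane hypothesis together with the orders of the local fundamental groups of the quotient singularities of $S$. Concretely, in the setting of the paper, where Kawamata's theorem furnishes a $\C^*$-fibration, I would view $C'$ as a horizontal curve for the induced fibration on $\s$ and read off from its intersection with the fibers that $(C')^2 = 1$ is incompatible with the multiple fiber components produced by a genuine cusp. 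Establishing this incompatibility cleanly, and thereby excluding $\sum_i m_i^2 \geq 6$, is the crux of the proof.
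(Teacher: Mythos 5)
Your proposal is not a proof, and you say so yourself: the ``matching step'' that is supposed to force $C\cdot C=1$ is exactly the content of the lemma, and nothing in your sketch supplies it. The preliminary bound is fine (a genuine cusp gives $\sum_i m_i^2\geq 6$, hence $C\cdot C\geq 7$ when $C'\cdot C'=1$), but the lattice-theoretic route you propose cannot contradict it. The determinant of the intersection lattice spanned by $D=C'+B+T_2+T_3$ is indeed $\pm\, C\cdot C$ (because $B\cup T_2\cup T_3$ contracts to a smooth point), but this is consistent with \emph{every} value of $C\cdot C$: as you concede, $\Q$-acyclicity of the complement tolerates cuspidal curves of large self-intersection (a cuspidal cubic in $\p^2$ is the standard example), and you never identify a finer invariant of the pair that would exclude $C\cdot C\geq 7$. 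Your closing appeal to Kawamata is also off target: Lemma 2.1 carries no hypothesis on the logarithmic Kodaira dimension, so no $\C^*$-fibration is handed to you a priori; in the paper this lemma is a purely geometric statement about the pair, used \emph{later} under the $\sk=1$ hypothesis.

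The idea you are missing is that the hypothesis $C'\cdot C'=1$ \emph{manufactures} the fibration. Blow up one smooth point of $D$ lying on $C'$: the proper transform $C'_h$ then has self-intersection $0$, hence is a full fiber of a $\p^1$-fibration $f$ on the blown-up surface $H$, and the exceptional curve $l$ of the blow-up together with the proper transform $B_h$ of $B$ are sections of $f$. Thus $f$ induces an untwisted $\C^*$-fibration on the affine $\Q$-homology plane $S-C$, with base $\A^1$. Since $T_2$ and $T_3$ both meet the section $B_h$, they lie in distinct fibers $F_2$ and $F_3$. In $F_2$, the component $l_1$ meeting the section $l$ is reduced and lies outside $D_h$; since a reducible fiber cannot have its unique $(-1)$-curve reduced, and since $T_2$ (like $T_3$) contains no $(-1)$-curves, $F_2$ must contain a second component outside $D_h$, and the same holds for $F_3$. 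So the $\C^*$-fibration has two reducible fibers, contradicting part (3) of the Miyanishi--Sugie structure theorem (Lemma 1.3), which allows at most one. This forces $C=C'$, i.e.\ $C$ smooth. In short: the paper converts the numerical hypothesis directly into a fibration and contradicts a structure theorem for $\C^*$-fibrations on $\Q$-homology planes, whereas your argument establishes a lower bound on $C\cdot C$ but contains no mechanism capable of contradicting it.
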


\begin{proof}
Let $h: H \rightarrow \tilde{S}$ be a blow up at a smooth point of $D$ on $C'$. The proper transform of $C'$ under $h$ will be denoted by $C'_h$. Clearly $C'_h \cdot C'_h=0$, hence $C'_h$ is a full fiber of a $\p^1$-fibration $f$ on $H$. Let $l$ be the (-1) curve in the total transform of $D$ under $h$. Let $D_h$ denote the total transform of $D$ under $h$. Then clearly $H-D_h$ has a $\C^*$-fibration which we denote by $f$. As $T_2$ and $T_3$ both meet $B_h=h^{-1}(B)$, which is a section of $f$ on $H$, they lie in distinct fibers of $f$. Let $F_2$ and $F_3$ be the respective fibers. The fiber $F_2$ meets $l$, as $l$ is a section of $f$. Let $l_1$ denote the irreducible component of $F_2$ which meets $l$. As $l_1$ is reduced, there is another irreucible component of $F_2$, which is a (-1) curve. We denote this by $M$. Both $l_1$ and $M$ are not in $D_h$, hence the singular fiber of the $\C^*$-fibration $f$ corresponding to $F_2$ is reducible. Similarly, we can argue that $F_3 \cap {H-D_h}$ is reducible. This contradicts (3) of Lemma 1.3. Hence $C=C'$ and $C$ is smooth.
\end{proof}
Let $C$ be a uni-cuspidal rational curve. By $n(C)$ denote the minimum number of blow-ups required so that the total transform of $C$ is SNC.

\begin{lemma}
Let the notations be as above. Let $C' \cdot C' \textgreater 0$ and $C$ be singular. Then there exists a rational curve $\theta$ with cuspidal singularities in $S^0$ such that $n(C) \textgreater n(\theta)$. Also, $\theta$ passes through the cusps of $C$.
\end{lemma}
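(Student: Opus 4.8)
The plan is to treat this lemma as the inductive step in a descent on $n(C)$: starting from the singular uni-cuspidal curve $C$, I will manufacture a cuspidal rational curve $\theta$ through the same cusp whose embedded resolution is strictly shorter. First note that since $C$ is uni-cuspidal and singular, Lemma 2.1 forbids $C' \cdot C' = 1$; together with the hypothesis $C' \cdot C' > 0$ this forces $C' \cdot C' = d \geq 2$. I would work on $\tilde S$ with the explicit shape $D = B \cup C' \cup T_2 \cup T_3$ recalled in the introduction, keeping in mind that $C'$ is a smooth rational curve (it is the normalization of the rational curve $C$), that $B$ is the unique $(-1)$-curve of $D$ and has branching number $3$, and that $B \cup T_2 \cup T_3$ contracts to the smooth point $p_0 \in S^0$ which is the cusp of $C$. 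Because $C \subset S^0$, the blow-ups of $\pi$ resolving the cusp are centred over $p_0$ and are disjoint from $E$.

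Mimicking the proof of Lemma 2.1, I would first turn $C'$ into a fibre. Let $h : H \to \tilde S$ be the blow-up of $d$ general smooth points of $C'$; then the proper transform $C'_H$ satisfies $C'_H \cdot C'_H = 0$, so $C'_H$ is a full fibre of a $\p^1$-fibration $f$ on $H$, and $B_H$ is a section (since $B \cdot C' = 1$). As the centres lie on $C' \subset D$, one has $H - D_H = \tilde S - D$, which is a resolution of the affine $\Q$-homology plane $S - C$; hence the restriction of $f$ is a $\C^*$-fibration to which Lemma 1.3 applies. The chains $T_2$ and $T_3$ meet the section $B_H$, so they lie in fibres of $f$, and it is the resulting fibre structure, constrained by Lemma 1.3, that I would analyse in order to locate $\theta$.

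The curve $\theta$ is then produced on $S$ as the image of a suitably chosen fibre-related curve of $f$. The point is that, since $B_H$ is a section and $B$ is contracted to $p_0$, any such curve meets $B_H$ and therefore its image passes through $p_0$, which is exactly the assertion that $\theta$ passes through the cusp of $C$. The gain in complexity must come from arranging that this image meets the contracted configuration $B \cup T_2 \cup T_3$ at an \emph{earlier} level of the resolution tree than $C'$ does, so that its cusp at $p_0$ has a shorter multiplicity sequence; re-resolving and counting the infinitely near points blown up for $\theta$ against those for $C$ then yields $n(\theta) < n(C)$. The hypothesis $d \geq 2$ is what guarantees enough room in the fibration for such a curve to exist while remaining irreducible and rational.

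The main obstacle is precisely this last, local, step. One must show that the chosen curve is irreducible and rational, that it has at $p_0$ a genuine cusp and no worse singularity and no other singular point, that it avoids $\mathrm{Sing}(S)$ so that $\theta \subset S^0$, and that its resolution is strictly shorter than that of $C$ rather than of the same length. Controlling the degenerate fibres of $f$ through Lemma 1.3, and tracking the contraction behaviour of the branch $B \cup T_2 \cup T_3$ in the spirit of the $(-1)$-curve bookkeeping used in Lemma 2.1, is where the real work lies; once the correct fibre-related curve is identified, everything else is formal.
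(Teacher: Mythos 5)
Your proposal contains a genuine error at its foundation, and the part you defer as ``the main obstacle'' is exactly the content of the paper's proof.

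The error: you blow up $d$ \emph{general (distinct)} smooth points of $C'$ to make $C'_H \cdot C'_H = 0$. Each of the resulting $d$ exceptional curves meets the fibre $C'_H$ exactly once, so each is a \emph{section} of the $\p^1$-fibration $f$, as is $B_H$. Since these exceptional curves lie in $D_H$ (the total transform of $D$), the horizontal part of $D_H$ has $d+1 \geq 3$ components, and the general fibre of $f$ restricted to $H - D_H$ is $\p^1$ minus at least three points --- not $\C^*$. So the restriction is not a $\C^*$-fibration and Lemma 1.3, which drives every subsequent step, cannot be invoked. The paper avoids this by blowing up \emph{infinitely near} points over a \emph{single} smooth point of $C'$: the exceptional locus is then a linear chain $[1,2,\dots,2]$ in which only the last $(-1)$-curve $L$ is a section, the $(-2)$-chain sits inside a fibre, and the fibration has exactly two horizontal components $B_h$ and $L$, hence genuinely restricts to a $\C^*$-fibration on the (unchanged) open surface.

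Beyond that, the substantive work of the lemma is missing rather than sketched. In the paper, $\theta$ is not just ``a suitably chosen fibre-related curve'': one proves, using Lemma 1.3 and $(-1)$-curve bookkeeping on degenerate fibres, that $T_2$ and the exceptional $(-2)$-chain must lie in the \emph{same} fibre $F$ (this requires ruling out the configuration where the non-linear branch $T_3$ shares the fibre with the chain), and then $\theta' = F - (T_2 \cup [2,\dots,2])$ is forced to be a single $(-1)$-curve with $F \cap (H - D_h) \cong \C^*$; its image in $S$ is $\theta$. That $\theta$ passes through the cusp follows because $\theta'$ meets $T_2$, which is contracted (as part of $B \cup T_2 \cup T_3$) to the cusp by $\pi$ --- not, as you argue, because a fibre component meets the section $B_H$; the component of $F$ meeting $B_h$ is $T_2$ itself, not $\theta'$. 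Your closing paragraph concedes that irreducibility, rationality, the cusp structure, and the inequality $n(\theta) < n(C)$ are all unresolved; these are precisely what the fibre analysis you have not carried out delivers.
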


\begin{proof}
We have assumed $C$ to be singular, thus by Lemma 2.1, $C' \cdot C' \textgreater 1$. Let $h: H \rightarrow \tilde{S}$ denote a sequence of blowing up of infinitely near points of a smooth point of $D$ on $C'$ so that $C'_h \cdot C'_h=0$. The exceptional locus of $h$ is a linear chain of rational curves with self intersections: $[1,2,...,2]$. Let the (-1) curve in the exceptional locus be denoted by $L$. As in Lemma 2.1, $f$ is a $\p^1$ fibration on $H$ and $C'_h$ is a full fiber of $f$. The curves $B_h$ and $L$ are the disjoint sections of $f$. The chain of (-2) curves in the exceptional locus of $h$ is in a fiber $F$ of $f$. By the notation above, $T_2$ is also a linear chain of rational curves. We claim that both $T_2$ and $[2,2,...,2]$ lie in $F$. To prove this claim, first note that $F \cap D_h$ has at least two connected components. If $F \cap D_h$ had exactly one connected component (i.e. $[2,...,2]$), then as in Lemma 2.1 $F \cap (H-D_h)$ would be reducible and if $F_2$ and $F_3$ are the singular fibers containing $T_2$ and $T_3$, then $F_i \cap (H-D_h)$ would also be reducible. This contradicts Lemma 1.3.\\

If both $T_2$ and $T_3$ are linear, the claim can be proved by interchanging $T_2$ and $T_3$. Assume therefore that $T_3$ is not linear and $T_3 \subset F$ and $[2,...,2] \subset F$. There is a curve $J$ in $F$ adjacent to $[2,...,2]$ which is not contained in $D_h$. Assume first that this is a (-1) curve. We blow it down and the subsequent (-1) curves which arise from the $[2,...,2]$ chain. As there is a reduced irreducible component in the image of $T_3$ in this sequence of blow downs, there is another (-1) curve in $F$ meeting $T_3$. This makes $F \cap (H-D_h)$ reducible. When $J$ is not a (-1) curve, there is already another curve in $F$ which is a (-1) curve and which is not contained in $D_h$. This makes $F \cap (H-D_h)$ reducible. On the other hand the fiber $T_2 \subset F_2$ is also such that $F_2 \cap (H-D_h)$ is reducible. This contradicts Lemma 1.3. Thus $T_2$ and $[2,...,2]$ are both in $F$ and clearly $F \cap (H-D_h) \cong \C^*$. It is easy to see that the curve $\theta$ in $S$, of which the (-1) curve $\theta'= F-(T_2 \cup [2,...,2])$ is the proper transform, is the required curve in the statement of the lemma such that $\theta'' \cdot \theta'' \textgreater 0$. Also note that $T_2$ is irreducible.
\end{proof}

Note that if $\theta$ is smooth then $\theta'$ is at the tip of the linear chain of rational curves $T_2 \cup B \cup T_3$, hence $\theta$ meets $C$ transversally. This gives
\begin{corollary}
Let the notation and hypothesis be as in Lemma 2.2, then there is a smooth rational curve $\theta \subset S^0$. Also, $\theta$ passes through the cusp of $C$ and meets $C$ transversally.
\end{corollary}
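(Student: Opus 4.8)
The plan is to produce the smooth curve by applying Lemma 2.2 repeatedly. The key observation is that the curve $\theta$ furnished by Lemma 2.2 again satisfies the hypotheses of that very lemma: it is a uni-cuspidal rational curve in $S^0$, and the proof of Lemma 2.2 exhibits it with proper transform satisfying $\theta'' \cdot \theta'' > 0$. Hence, whenever the output $\theta$ is itself singular, Lemma 2.2 may be fed $\theta$ in place of $C$, and the construction becomes self-propagating.

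First I would dispose of the easy case. If the curve $\theta$ furnished by Lemma 2.2 is already smooth, then by the observation preceding the corollary its proper transform $\theta'$ lies at the tip of the linear chain $T_2 \cup B \cup T_3$, so $\theta$ meets $C$ transversally, and the corollary holds with this $\theta$. Otherwise $\theta$ is singular; since $\theta$ is uni-cuspidal with $\theta'' \cdot \theta'' > 0$ (indeed $>1$, by Lemma 2.1 applied to $\theta$), the hypotheses of Lemma 2.2 are met, and I would apply the lemma to $\theta$ to obtain a further curve $\theta_2$ with $n(\theta) > n(\theta_2)$, continuing in this way.

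Next I would run the construction as a descent. Iteration yields a strictly decreasing sequence of non-negative integers $n(C) > n(\theta_1) > n(\theta_2) > \cdots$, which must terminate after finitely many steps. The process can stop only at a curve to which Lemma 2.2 cannot be applied, that is, at a smooth rational curve $\theta_k$. By the observation preceding the corollary, this terminal $\theta_k$ meets $C$ transversally, which is exactly what is to be proved.

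The step I expect to be the main obstacle is checking that ``passing through the cusp'' propagates correctly along the chain, so that the terminal smooth curve passes through the single cusp $p$ of the original $C$ and not through a point that has drifted during the iteration. Concretely, one must verify that each intermediate curve $\theta_i$ has its own cusp located at $p$; then $\theta_{i+1}$, which by Lemma 2.2 passes through the cusp of $\theta_i$, again passes through $p$, and transitivity follows. This rests on the fact that the construction in Lemma 2.2 is entirely local over $p$: the components $B$, $T_2$, $T_3$ and the exceptional chain $[2,\ldots,2]$ all lie in the fiber over $p$, and the new curve is extracted from a fiber containing this exceptional configuration, so that both the passage through $p$ and the resolution of the new cusp take place over $p$. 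Pinning the cusp of every successive curve at $p$ is the delicate point; once it is in place, the descent immediately delivers a smooth rational curve $\theta \subset S^0$ passing through the cusp of $C$ and meeting $C$ transversally.
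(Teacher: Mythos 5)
Your proposal is correct and takes essentially the same route as the paper: the inequality $n(C) > n(\theta)$ in Lemma 2.2 exists precisely to drive the descent you describe, and the paper's entire proof of the corollary is the remark preceding it (the terminal smooth curve sits at the tip of the chain $T_2 \cup B \cup T_3$, hence meets the curve transversally) combined with this implicit iteration. Your additional care in verifying that the hypotheses of Lemma 2.2 propagate (uni-cuspidality, $\theta'' \cdot \theta'' > 0$) and that the cusp stays pinned at the cusp of $C$ by locality of the construction is exactly the detail the paper leaves unstated.
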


\section{Proof of Theorem 1.1}
We prove the Theorem in several steps.

\begin{lemma}
If $\ol \kappa(\tilde{S}-D)=1$ then $C$ has at most two cusps.
\end{lemma}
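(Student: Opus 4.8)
The plan is to realize the hypothesis $\ol\kappa(\widetilde S - D)=1$ as producing a $\C^*$-fibration and then to read off the number of cusps of $C$ from the number of degenerate fibers, which Lemma 1.3 severely restricts. Since $S-C$ is an affine $\Q$-homology plane and $\widetilde S - D$ is a resolution of $S-C$, we have $\ol\kappa(S-C)=\ol\kappa(\widetilde S -D)=1$; by the result of Kawamata quoted in the introduction, $S-C$ (equivalently $\widetilde S - D$) carries a $\C^*$-fibration $f:\widetilde S - D \to B$, to which the Miyanishi--Sugie structure theorem (Lemma 1.3) applies. First I would complete $f$ to a $\p^1$-fibration $\ol f$ on a blow-up $H \to \widetilde S$, exactly as in Lemmas 2.1 and 2.2: after blowing up a smooth point of $D$ on $C'$ the proper transform $C'_h$ becomes a full fiber (or, dually, $C'$ together with the second horizontal boundary component becomes the two sections of $\ol f$), so that each cusp of $C$ contributes its resolution tree $B\cup T_2 \cup T_3$ to the fibration.

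Next I would set up a correspondence between the cusps of $C$ and the degenerate fibers of $f$. For a single cusp the branch point $B$ has branching number $3$, so its two sub-branches $T_2$ and $T_3$, both meeting the horizontal curve $B_h$, must lie in distinct fibers $F_2,F_3$; the argument of Lemma 2.1 then shows that each of $F_2\cap(H-D_h)$ and $F_3\cap(H-D_h)$ tends to be reducible, i.e. each cusp forces degenerate fibers that are reducible or of type $\A^1$. Running the refinement of Lemma 2.2 (absorbing one sub-branch into the $[2,\dots,2]$ chain produced by the blow-ups) one sees that a cusp can be arranged to cost only the minimum allowed by Lemma 1.3. The heart of the matter is the counting: by Lemma 1.3 the base $B$ is $\p^1$ or $\A^1$ and, in every case (including the twisted/untwisted dichotomy), $f$ admits essentially one reducible or $\A^1$-type fiber per \emph{end} of the generic $\C^*$. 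I would argue that two cusps exhaust this budget---using the two ends of the generic fiber together with the Euler-characteristic constraint $e(S-C)=e(\A^2)=1$, which pins down $\sum_i\bigl(e(F_i)-e(\C^*)\bigr)=1$ and hence the total weight available to degenerate fibers---while a third cusp would force a fiber configuration (an extra reducible fiber, or a fiber meeting $H-D_h$ in something other than $\C^*$) excluded by Lemma 1.3.

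The main obstacle I anticipate is exactly this last bookkeeping: establishing a clean injection from cusps to forbidden degenerate fibers that survives all cases of Lemma 1.3 simultaneously---$B\cong\p^1$ versus $B\cong\A^1$, and twisted versus untwisted---and ruling out that the resolution trees of distinct cusps collapse into a single fiber so as to evade the count. One must also verify that the completion can be chosen so that $C'$ and the second boundary component really are the horizontal curves (two sections, or a single $2$-section) of $\ol f$, since the sign of $C'\cdot C'$ and the twistedness of $f$ determine whether $C'$ is a section or a fiber; handling $C'\cdot C' \le 0$ may require first blowing up as in Lemmas 2.1--2.2 to reach the configuration where the reducibility argument applies. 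Once the cusp-to-fiber correspondence is pinned down, the bound of at most two cusps follows immediately from Lemma 1.3.
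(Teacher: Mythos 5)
Your proposal gets off the ground correctly (the $\C^*$-fibration $f$ on $\widetilde{S}-D$ coming from $\ol\kappa=1$ via Kawamata, with Lemma 1.3 as the constraint), but the step you defer as ``bookkeeping'' --- the clean injection from cusps to forbidden degenerate fibers --- is not bookkeeping, it is the whole proof, and in the form you propose it would not go through. The obstruction is that resolution trees of cusps \emph{can} legitimately lie inside fibers of the $\C^*$-fibration without violating Lemma 1.3: this actually happens in the bi-cuspidal case analyzed later in the paper, where $H_1,H_2$ are horizontal and all four branches $B_{ij}$ sit in fibers. So counting degenerate fibers does not bound the number of cusps. There is also a structural problem with your first step: the fibration of Lemmas 2.1--2.2, obtained by blowing up on $C'$ until $C'_h$ is a fiber, is a \emph{different} fibration from the intrinsic one, and when $C$ has three or more cusps it is not even a $\C^*$-fibration on the open part --- $C'$ then has branching number at least three in $D$, so making $C'_h$ a full fiber forces at least three components of $D_h$ to be horizontal, and the generic fiber of the restricted fibration is $\p^1$ minus three or more points, about which Lemma 1.3 says nothing. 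Finally, the normalization $e(S-C)=1$ does not transfer to the surface actually carrying the fibration ($\widetilde{S}-D$ or $\widetilde{S}-D-E$), whose Euler characteristic is larger when $S$ is singular.

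The paper's proof runs in the opposite direction and is purely combinatorial: it counts \emph{horizontal} components, not fibers. Each cusp contributes to $D$ a $(-1)$-curve of branching number three (the last exceptional curve of its resolution), and these are pairwise non-adjacent. A $(-1)$-curve contained in a fiber of a $\p^1$-fibration has branching number at most two in that fiber, and at most one if it is reduced; using this together with the fact that $D$ is a tree, one sees that such a distinguished $(-1)$-curve cannot lie in a fiber at all: if it did, two of its neighbours in $D$ would have to be horizontal, exhausting both horizontal slots, and a second distinguished $(-1)$-curve lying in a fiber would then need the same two horizontal neighbours, closing a loop in the tree $D$. Hence every distinguished $(-1)$-curve is horizontal; since a $\C^*$-fibration admits at most two horizontal components of $D$, there are at most two cusps. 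If you want to salvage your approach, this is the injection to aim for: cusps into horizontal components of $D$, not cusps into degenerate fibers.
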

\begin{proof}
Because $\ol \kappa(\tilde{S}-D)=1$ there is a $\C^*$ fibration $f$ on $\tilde{S}-D$. Assume if possible that $C$ has three or more cusps. Then it is clear that $D$ has at least three (-1) curves each with three branches and none of them are adjacent to each other.These (-1) curves arise from the final blow up to resolve each cusp and make $D$ a SNC divisor.\\

We claim that none of these (-1) curves can be contained in a fiber: Indeed if a (-1) curve $l$ from $D$ is contained in a fiber $F$ of $f$ then it cannot have branching number three in $F$. Thus at least one of the curves adjacent to $l$ is horizontal to $f$. But this makes $l$ a reduced (-1) curve in $F$. Hence $l$ cannot have branching number two. Hence two curves adjacent to $l$ are horizontal to $f$. None of them are (-1) curves, hence the other (-1) curves in $D$ are in the fibers of $f$ because there are only two components of $D$ which are horizontal to $f$. Let $l'$ be the other (-1) curve in $D$ which is contained in a fiber $F'$ of $f$. By a similar reasoning, two of the branches of $l'$ are horizontal to $f$. Thus $D$ contains a loop. This is not possible as $D$ is a tree of rational curves. Hence none of the (-1) curves are in fibers of $f$. Hence all the (-1) curves in $D$ are horizontal to $f$. But there are exactly two components of $f$ which are horizontal to $D$. Hence $D$ cannot have more than two (-1) curves. Hence the number of cusps of $D$ is at most two.
\end{proof}

The number of contractible curves 
on a $\Q$-homology plane is of special interest. In \cite{gurjar88} Gurjar-Miyanishi and Gurjar-Parameswaran
\cite{gurjar95} proved that if the logarithmic Kodaira dimension of a $\Q$-homology plane is 0 or 1 then the surface 
has at most two contractible curves. In \cite{miyanishi92}, Miyanishi-Tsunoda proved the important result that a 
$\Q$-homology plane of general type does not contain any contractible curve.  We will use these results to prove:

\begin{lemma}
If $0 \leq \sk(\s-D-E) < 2$  then every $\C^*$-fibration on $\s-D-E$ is untwisted.
\end{lemma}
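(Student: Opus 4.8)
The plan is to argue by contradiction and to exploit the one feature that distinguishes a twisted $\C^*$-fibration, namely the irreducible $2$-section at infinity. Suppose $f\colon \s-D-E\to B$ is a twisted $\C^*$-fibration. By part (1) of Lemma 1.3 the base is $B\cong\A^1$, and by part (3) every fibre of $f$ is irreducible while exactly one fibre $F_0$ satisfies $F_0\cong\A^1$; this $F_0$ is a topologically contractible curve in $\s-D-E$. Since $0\le\sk(\s-D-E)<2$, the surface is neither of general type nor affine-ruled, so the results of Gurjar--Miyanishi (\cite{gurjar88}) and Gurjar--Parameswaran (\cite{gurjar95}) apply: a $\Q$-homology plane with $\sk\in\{0,1\}$ carries at most two contractible curves. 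The aim is to show that the twisting is incompatible with this constraint, by transporting the problem to an untwisted cover where Lemma 1.3 again applies.

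To make the twisting usable I would pass to the associated untwisting double cover. Completing $f$ to a $\p^1$-fibration $\bar f\colon V\to\p^1$ on a smooth model, the horizontal part of the boundary is a single irreducible $2$-section $\Sigma$ (this irreducibility is precisely what ``twisted'' means), and $\Sigma\to\p^1$ is a degree-two cover of a rational curve, hence branched at exactly two points, which must be $\infty$ and the point $p_0$ lying under $F_0$. Taking the double cover of $V$ branched along the two fibres $\bar f^{-1}(p_0)$ and $\bar f^{-1}(\infty)$ separates $\Sigma$ into two disjoint sections and produces an untwisted $\C^*$-fibration $\tilde f\colon \tilde X\to\tilde B$, where $\tilde B\cong\A^1\to B$ is the map $s\mapsto s^2$. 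On the open surfaces this yields a double cover $\rho\colon\tilde X\to \s-D-E$ branched exactly along $F_0$, with deck involution $\iota$ covering $s\mapsto -s$. A Riemann--Hurwitz count gives $\chi(\tilde X)=2\chi(\s-D-E)-\chi(F_0)=2-1=1$, and the fibre of $\tilde f$ over the fixed point $s=0$ is $\rho^{-1}(F_0)\cong F_0\cong\A^1$, in particular irreducible.

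The contradiction I am aiming for is then a symmetry obstruction. Once it is established that $\tilde X$ is again an affine $\Q$-homology plane with $0\le\sk(\tilde X)<2$ --- it cannot be of general type, since it contains the contractible curve $\rho^{-1}(F_0)$, which is forbidden in that case by Miyanishi--Tsunoda (\cite{miyanishi92}) --- part (3) of Lemma 1.3 forces the untwisted fibration $\tilde f$ to have exactly one reducible fibre. But $\iota$ permutes the fibres of $\tilde f$ by $s\mapsto -s$, so it must carry the unique reducible fibre to itself, placing it over the fixed point $s=0$; this contradicts the irreducibility of the fibre of $\tilde f$ over $s=0$ found above, so no twisted fibration can exist. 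The main obstacle is precisely the hypothesis singled out in the previous sentence: verifying that $\tilde X$ is a $\Q$-homology plane, i.e. that $H_1(\tilde X,\Q)=0$, so that Lemma 1.3 is genuinely available upstairs. This is where the condition $0\le\sk<2$ and the explicit shape of the boundary $D\cup E$ (a tree of rational curves determined by the cusps of $C$ and the quotient singularities of $S$) must be fed in; if $H_1(\tilde X,\Q)$ failed to vanish, the cover could acquire a second reducible fibre interchanged with the first by $\iota$ and the obstruction would dissolve, so controlling the rational homology of the branched cover is the real work of the proof.
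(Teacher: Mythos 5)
Your strategy---pass to the untwisting double cover and derive a contradiction from the deck involution---is genuinely different from the paper's, but it has two fatal gaps, one of which you flag yourself. First, the claim that the normalized base-change cover $\rho\colon\tilde X\to\s-D-E$ is \emph{branched exactly along $F_0$} is unjustified and is false in the typical twisted situation: the cover induced by $s\mapsto s^2$ is branched only along those components of the fibres over $p_0$ and $\infty$ that occur with \emph{odd} multiplicity, and for a twisted fibration the $\A^1$-fibre frequently has multiplicity two (this is exactly the $[2,1,2]$ configuration that the paper invokes for the fibre at infinity). Concretely, take $X=(\C^*\times\A^1)/\iota$ with $\iota(t,s)=(t^{-1},-s)$, i.e. $X=\{v^2=w(u^2-4)\}$ with the twisted fibration $(u,v,w)\mapsto w$: the scheme-theoretic fibre over $w=0$ is $2\{v=w=0\}$, the untwisting cover restricted to the smooth locus is \emph{\'etale} (it is $\C^*\times\A^1$ minus two points, with $H_1=\Z$), $\chi(\tilde X)=2\chi$ rather than $2\chi-1$, and the fibre over $s=0$ upstairs is $\C^*$, not $\A^1$. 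So both the Riemann--Hurwitz count and the ``irreducible $\A^1$-fibre over the fixed point'' conclusion, on which your final contradiction rests, fail; note that none of your intermediate steps used the hypothesis $\sk\geq 0$, so this example genuinely breaks them even though it has $\sk=-\infty$ and hence does not contradict the lemma itself.

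Second, and more fundamentally, the step you defer---that $\tilde X$ is a $\Q$-homology plane so that Lemma 1.3 is available upstairs---is not a technical verification but is essentially the whole content of the lemma, and your framework has no route to it: your argument never uses the structure of $D$ (the Orevkov-type resolution graph of the cuspidal curve $C$), whereas the statement is simply false for general $\Q$-homology planes with $0\le\sk<2$, on which twisted $\C^*$-fibrations do occur (they are a standing case in the Miyanishi--Sugie and subsequent classifications, which is why Lemma 1.3 describes the twisted case at all). Any correct proof must therefore exploit the boundary, and that is what the paper does: it first shows $E$ is vertical, so the fibration descends to the affine $\Q$-homology plane $S-C$ (note that Lemma 1.3 applies to $S-C$, not to $\s-D-E$, which is not affine when $S$ is singular; this also invalidates your computation $\chi(\s-D-E)=1$, since $\chi(\s-D-E)=1-\#\mathrm{Sing}(S)$), and then runs a base-point and branching-number analysis on $D$: in the uni-cuspidal case the unique $(-1)$-curve $B$ of branching number three cannot accommodate the $[2,1,2]$ fibre at infinity, and in the bi-cuspidal case the two $(-1)$-curves $H_1$, $H_2$ cannot either. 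The covering idea is attractive, but as it stands the contradiction dissolves precisely in the configurations that actually occur.
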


\begin{proof}
Note that given a $\C^*$-fibration $f$ on $\s-D-E$, we can extend $f$ to a $\C^*$-fibration on $\s-D$. That is to say, 
$E$ is contained in fibers of $f$. Indeed if $E$ is not contained in the fibers of $f$ then an irreducible component 
of $E$ is horizontal to $f$. Hence there are infinitely many contractible curves in $S-C$. This shows that 
$\sk(\s-D)=-\infty$. Which is contrary to hypothesis.\\

Case 1. C is uni-cuspidal.\\
We use the description of $D$ when $C$ is uni-cuspidal.\\

Under the hypothesis of the Lemma, if $C$ is uni-cuspidal then $C'$, the proper transform of $C$, is not a (-1)-curve. 
Indeed if it is a (-1)-curve then we can blow it down and map $D$ to a divisor with a non-contractible twig, which 
would imply that $\sk(\s-D)=-\infty$, as proved in \cite[Lemma 6.13]{fujita82}. This is contrary to the hypothesis.\\

Assume that $f$ is a twisted $\C^*$-fibration on $\s-D-E$. By Lemma 1.3, the $\C^*$-fibration $f$ 
is over $\A^1$. Further, all singular fibers of $f$ are irreducible and exactly one of them is an affine line (if taken
with reduced structure). In 
this case it is well known that the fiber at infinity is of the type [2,1,2].\\

Assume that $f$ has no base points on $D$. Then $B$ is the unique (-1)-curve in $D$ and is contained in the fiber at 
infinity. Let $\theta$ denote the 2-section of $f$ which is an irreducible component of $D$ adjacent to $B$. We know 
that $\theta$ can have branching number at most three. Let $\theta$ have branching number three. By $M_1$ and $M_2$ we 
denote the two branches of $\theta$ other than $B$. Note that $M_1$ and $M_2$ contain no (-1)-curves. If $M_1$ and $M_2$ 
are in separate fibers then both fibers will contain an $\A^1$ for the fibration over $\s-D-E$. This contradicts 
Lemma 1.3. Hence $M_1$ and $M_2$ lie in the same singular fiber of $f$. Let $F_0$ denote the fiber of 
$f$ containing both $M_1$ and $M_2$. It is clear that if $F_0 \cap (\s-D)$ contains an $\A^1$ then it contains at least 
one more irreducible component. This is not possible. Hence the branching number of $\theta$ is at most two. Therefore, 
the number of singular fibers of $f$ is at most two. This gives us an open subset of $\s-D-E$ which is a twisted 
$\C^*$-bundle over $\C^*$. Thus $\sk(\s-D-E)=0$. This is contrary to hypothesis.\\

We now prove that $f$ has no base points on $D$ to finish the proof in Case 1. If $f$ has a base point on $D$, it has 
to be unique. Indeed if $f$ has two base points on $D$ then $f$ is an untwisted fibration. Further, the base point of 
$f$ will have to lie on $B$ as $B$ is the unique (-1)-curve in $D$ with branching number three. Let $D'$ be the total 
transform of $D$ under the resolution of base locus. As the base point is on $B$, $D'$ has a unique (-1)-curve which 
is horizontal to $f$. But the fiber at infinity must also contain a (-1)-curve. This contradiction shows that $f$ has 
no base points on $D$ if $C$ is uni-cuspidal.\\

Case 2. C is bi-cuspidal.\\

We begin with a description of $D$ when $C$ is bi-cuspidal: By $C'$ we denote the proper transform of $C$. The 
branching number of $C'$ is two. It is adjacent to two (-1)-curves $H_1$ and $H_2$ both having branching number three. 
By $B_{11}$ and $B_{12}$ we denote the branches of $H_1$ other than $C'$. By $B_{22}$ and $B_{21}$ we denote the 
branches of $H_2$ other than $C'$. We can assume $B_{11}$ and $B_{22}$ to be linear. Also at least one of $B_{12}$ or $B_{11}$ and at least one of $B_{21}$ or $B_{22}$ has an irreducible component whose self intersection is less than -2. Both the branches of $C'$ 
can be blown down to smooth points.\\

Assume that $f$ has no base points on $D$. In this case there is a fiber at infinity of the type $[2,1,2]$. This 
is impossible by the description of $D$ above.\\

The fibration $f$ cannot have a base point on $D$: The base point of $f$ on $D$ will have to be unique and will have 
to lie on one of the (-1)-curves with branching number three. This forces the other (-1)-curve with branching number 
three to be in a singular fiber and have branching number three in the fiber. This is not possible. Hence $f$ has no 
base points on $D$ when $C$ is bi-cuspidal and the Lemma is proved.
\end{proof}

For our futher use, let us recall the notion of a \emph{rivet} (see \cite{fujita82}) in a 
$\C^*$-fibration. Let $f: \s-D \rightarrow C$ be a $C^*$-fibration and let $D_h$ denote the sum of components of $D$ which are 
horizontal to $f$. Let $F$ be a singular fiber of $f$. Then a connected component of $F \cap D$ is called a rivet if 
it meets $D_h$ in more than one point.\\  

\begin{lemma}
If $\sk(\s-D-E)=1$ then the $\C^*$-fibration on $\s-D$ has no base points on $D$.
\end{lemma}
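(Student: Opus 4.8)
The plan is to argue by contradiction, building on the two preceding lemmas. By the quotient-singularity invariance of the logarithmic Kodaira dimension we have $\sk(\s - D) = \sk(\s - D - E) = 1$, so Lemma 3.1 applies and $C$ has at most two cusps; this puts us in exactly the uni-cuspidal or bi-cuspidal situations whose divisors $D$ were described in the introduction and in the proof of Lemma 3.2. By Lemma 3.2 the fibration $f$ is untwisted, and, as observed there, $E$ is vertical, so the two horizontal components of $f$—call them $G_1$ and $G_2$—both lie in $D$. First I would reuse the localization already carried out in Lemma 3.2: a base point of $f$ on $D$ must be unique and must lie on the unique $(-1)$-curve of $D$ of branching number three (the curve $B$ when $C$ is uni-cuspidal, one of $H_1, H_2$ when $C$ is bi-cuspidal), since any other placement either forces a second horizontal $(-1)$-curve or produces a fibre component of branching number three.

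Suppose then that such a base point $p$ exists. I would resolve it by a sequence of blow-ups $\sigma \colon Y \to \s$, turning the rational map into a genuine $\p^1$-fibration $Y \to \ol B$ over the completed base $\ol B \cong \p^1$. The exceptional chain of $\sigma$, together with the strict transforms of the $D$-components through $p$, sits inside a single fibre $\Phi$, and the strict transforms of $G_1$ and $G_2$ now meet $\Phi$ rather than each other. In the language introduced just before the statement, $\Phi \cap D$ is a rivet: a vertical connected component joining the two horizontal sections. I would then match $\Phi$ against the fibre structure permitted by Lemma 1.3, noting that $\Phi$ is forced to play the role of the single special fibre allowed there, so that every other singular fibre is irreducible in $\s - D - E$ and can contribute only as a multiple fibre supported on one curve.

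The contradiction I would extract is that $\sk$ collapses to $0$. Writing $r$ for the number of boundary points of the image of $f$ in $\ol B$ (so $r = 0$ if the base is $\p^1$ and $r = 1$ if it is $\A^1$) and $m_1, \dots, m_k$ for the multiplicities of the multiple fibres, Kawamata's analysis of $\C^*$-fibrations gives $\sk(\s - D - E) = 1$ precisely when $-2 + r + \sum_i (1 - 1/m_i) > 0$. The rivet coming from $p$ consumes the one special fibre of Lemma 1.3 and pins down the behaviour of $G_1, G_2$ over that point, so the remaining irreducible fibres cannot supply enough orbifold weight: the quantity $-2 + r + \sum_i(1 - 1/m_i)$ fails to be positive, whence $\sk(\s - D - E) \le 0$. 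This is exactly why the hypothesis had to be strengthened from $0 \le \sk < 2$ in Lemma 3.2 to $\sk = 1$ here, and it contradicts $\sk(\s - D - E) = 1$. In the bi-cuspidal case I would, in addition, track the branching-number-three $(-1)$-curve not carrying the base point and check, as in Lemma 3.2, that it is driven into a fibre with branching number three, which is impossible.

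The hard part will be the quantitative bookkeeping of the third step: showing precisely that the rivet created by resolving $p$ exhausts the multiple-fibre budget, so that the orbifold degree cannot reach a positive value. The two choices of base require separate counts—$\ol B \cong \A^1$ already contributes the term $r = 1$, so only a small deficit need be ruled out, whereas $\ol B \cong \p^1$ demands controlling several multiple fibres at once—and the bi-cuspidal configuration forces one to follow both branching-number-three $(-1)$-curves simultaneously while keeping the rivet and the horizontal sections in legal position. I would dispose of the base $\cong \p^1$ subcase and the bi-cuspidal $(-1)$-curve obstruction first, as these are closest to the combinatorial arguments of Lemmas 3.1 and 3.2, and reserve the orbifold computation for the residual uni-cuspidal case over $\A^1$.
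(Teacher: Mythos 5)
Your first two steps do track the paper's proof closely: untwistedness and verticality of $E$ come from Lemma 3.2, a base point (if any) is unique and lies on the branching-three $(-1)$-curve, and resolving it produces a rivet $R$ in the total transform $D'$. One caveat before the main point: your opening appeal to ``quotient-singularity invariance'' to get $\sk(\s-D)=\sk(\s-D-E)$ is not a valid step. These quantities can differ, which is exactly why the paper carries $\sk(S-C)\neq -\infty$ as a separate standing hypothesis of Theorem 1.1; since $\s-D-E\subset \s-D$ one only gets $\sk(\s-D)\leq \sk(\s-D-E)=1$, and the fact that it is not $-\infty$ must be quoted from the theorem's hypotheses, not derived.

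The genuine gap is in your third step, which is where the entire content of the lemma lies, and you concede it yourself (``the hard part \dots quantitative bookkeeping''). You assert that once the rivet exists, ``the remaining irreducible fibres cannot supply enough orbifold weight'' to make $-2+r+\sum_i(1-1/m_i)$ positive. Nothing in your outline supports this: an irreducible fibre of a $\C^*$-fibration can perfectly well be a multiple $\C^*$, Lemma 1.3 places no bound on the number of such fibres, and each contributes a positive term to the orbifold sum, so the inequality you need is not a consequence of the rivet plus Lemma 1.3. What actually closes the argument in the paper is a combinatorial bound that your sketch never produces: every fibre containing a vertical component of $D'$ either is the unique rivet fibre or must contain a branch of each of the two horizontal components (otherwise $F\cap D'$ is connected and misses one section, forcing $F\cap(\s-D-E)$ to be reducible, against Lemma 1.3); since the horizontal $(-1)$-curve $D'_{h_1}$ has branching number at most two, this pairs up the branches and shows that all degenerate behaviour --- including any multiple fibre, because a fibre with no vertical $D'$- or $E$-component is either irreducible and reduced or violates Lemma 1.3 --- sits over at most two points of the base. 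That is what yields an open subset of $\s-D-E$ which is a $\C^*$-bundle over $\C^*$, hence $\sk(\s-D-E)\leq 0$, the desired contradiction; your orbifold inequality is essentially equivalent to this conclusion, not a route to it. The same omission recurs in your bi-cuspidal case, where the paper additionally needs that the branch $M_2$ contains no $(-1)$-curve and that $M_1$ and $M_2$ would have to share a fibre; nothing in your outline replaces these steps.
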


\begin{proof}
Case 1. $C$ is uni-cuspidal.\\

By hypothesis there is a $\C^*$ fibration on $\s-D-E$ which we denote by $f$. If a component of $E$ is horizontal to 
$f$ then $S-C$ has infinitely many contractible curves, hence $\sk(\s-D)=-\infty$, which is  contrary to hypothesis. 
Thus $E$ is contained in the fibers of $f$. By Lemma 3.2, we know that $f$ is untwisted. We claim that $f$ extends to 
a $\p^1$-fibration on $\s$. Suppose that $f$ has a base point on $D$. Then at least one of the base points of $f$ lies 
on $B$. If $D'$ is the total transform of $D$ under the resolution of base locus, let $D'_{h_1}$ and $D'_{h_2}$ be the 
two horizontal components of $D'$. We may assume $D'_{h_1}$ to be a (-1)-curve with branching number at most 2 in $D'$. 
There is also a rivet $R$ in $D'$ for $f$. We claim that $R$ is not a full fiber of $f$. Indeed if $R$ were a full fiber of 
$f$, then $R$ would have to contain a (-1)-curve from $D'$. This would force $C'^2=-1$ and $C' \subset R$. If 
$C'^2=-1$ in $D'$ then $C'^2=-1$ in $D$. By Corollary 2.3, $\sk(\s-D)=-\infty$ which  
is contrary to hypothesis. Thus $R$ is not a full fiber of $f$. This shows that $f$ is an untwisted 
$\C^*$-fibration on $\s-D-E$ with base $\p^1$. Then by Lemma 1.3 all the fibers 
of $f$ are irreducible and one of them is $\A^1$ which meets $E$. We claim that 
$D'_{h_2}$ has branching number equal to the branching number of $D'_{h_1}$. If this is not true, then, one of the branches 
of $D'_{h_1}$ or $D'_{h_2}$ would lie in a fiber $F$ of $f$ such that $F \cap D'$ is connected. 
This would force $F \cap (\s-D-E)$ to be reducible, i.e. a union of $\C^*$ and an $\A^1$. This is a contradiction. Thus 
both $D_{h_1}$ and $D_{h_2}$ both have the same branching number. This gives us an open subset of $\s-D-E$ which is a 
$\C^*$ bundle over $\C^*$, so that $\sk(\s-D-E)=0$, which is not true. Thus $f$ has no base points on $D$.\\

Case 2. $C$ is bi-cuspidal.\\

Let, if possible, $f$ have base points on $D$ and let $D'$ be the total transform of $D$ under the resolution of base 
locus. Let $D'_{h_1}$ and $D'_{h_2}$ be the horizontal components of $D'$ to $f$. We may assume that $D'_{h_1}$ is 
a (-1)-curve and has branching number at most two in $D'$. While $D'_{h_2}$ has branching number at most three. As 
$D'$ is connected there is a rivet $R$ of $f$ in $D'$. Thus $R$ forms one of the branches of both $D'_{h_1}$ and 
$D'_{h_2}$.\\

We claim that $D'_{h_2}$ has branching number at least two. Indeed, if $R$ was the only branch of $D'_{h_2}$, then 
$f$ would have at most two singular fibers as the branching number of $D'_{h_1}$ is at most two. This will mean that 
there is an open subset of $\s-D-E$ which is a $\C^*$ bundle over $\C^*$. Thus, $\sk(\s-D-E)=0$. This is contrary 
to hypothesis. We denote by $M_2$ the branch of $D'_{h_2}$ other than $R$.\\

If the branching number of $D'_{h_1}$ is one, then there is an open subset of $\s-D-E$ which is a $\C^*$-bundle 
over $\C^*$. This implies that $\sk(\s-D-E)=0$, which is a contradiction. Hence we may assume that the branching 
number of $D'_{h_1}$ is two.\\

We further claim that $M_2$ has no (-1)-curve. If $M_2$ had a (-1)-curve in it then $M_2$ contains the proper 
transform of $C'$ or $H_1$ or $H_2$. If the proper transform of $C'$ is a (-1)-curve in $D$ then $C'$ will have 
to be in $R$ because the base points of $f$ can only lie on $H_1$ and $H_2$ and each $H_i$ has at most one base point. 
Thus $C'$ is not contained in $M_2$. If $H_1 \subset M_2$, then at least one of the irreducible components of $D'$ 
adjacent to $H_1$ is horizontal to $f$, because $H_1$ has branching number three in $D$. This makes $H_1$ a reduced 
component of the fiber of $f$ containing $M_2$. Hence $H_1$ cannot have branching number 2 in $M_2$. Thus $H_1$ is 
in $R$. Similarly, $H_2$ is not contained in $M_2$ This shows that $M_2$ has no (-1)-curves.\\

We have proved that $D'_{h_1}$ has branching number two. Let $M_1$ be the branch of $D'_{h_1}$ other than $R$. We 
claim that $M_1$ and $M_2$ lie in the same fiber of $f$. To see this, assume if possible that $M_1$ and $M_2$ are 
in different fibers of $f$, say $F_1$ and $F_2$. Then $F_1 \cap (\s-D-E)$ and $F_2 \cap (\s-D-E)$ are both reducible 
as $M_1$ and $M_2$ do not contain a (-1)-curve. This is impossible by Lemma 2.10 of \cite{miya91} as there is at most 
one reducible fiber to $f$. Thus $M_1$ and $M_2$ are in the same fiber of $f$ and yet again we get an open subset 
of $\s-D-E$ which is a $\C^*$-bundle over $\C^*$ which contradicts our hypothesis.\\

We may thus assume that $D'_{h_2}$ has a third branch $M_3$ such that $M_3 \subset F_3$. It is easy to see that 
$F_3$ contains a reduced $\C^*$ which will again produce an open set of $\s-D-E$ which is a $\C^*$ bundle over $\C^*$. 
This shows that $f$ has no base points on $D$.

\end{proof}

\begin{lemma}
If $\sk(\s-D-E)=1$ then there exists a smooth rational curve in $S^0$ passing through the cusps of $C$.
\end{lemma}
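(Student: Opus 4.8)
The plan is to run the same ``reducible fibre'' machinery used in Lemmas 3.1--3.3, but now to \emph{produce} the curve $\theta$ as the image of a distinguished fibre component rather than to derive a contradiction. First I would fix the fibration: by Kawamata's theorem the hypothesis $\sk(\s-D-E)=1$ gives a $\C^*$-fibration $f$ on $\s-D-E=S-C$, and as in the earlier lemmas $E$ must be vertical (a horizontal component of $E$ would produce infinitely many contractible curves in $S-C$ and force $\sk(\s-D)=-\infty$), so $f$ extends to a $\p^1$-fibration $\bar f:\s\to\p^1$. By Lemma 3.2 this fibration is untwisted, hence $D$ has exactly two horizontal components, each a $1$-section, and by Lemma 3.3 $\bar f$ has no base points on $D$. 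Consequently every irreducible component of $D$ is either one of the two sections or is contained in a fibre.

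Next I would locate the curves $B$ (uni-cuspidal) and $H_1,H_2$ (bi-cuspidal), which are exactly the components of $D$ of branching number three. Repeating the argument of Lemma 3.1, any such curve that is vertical must have two of its three branches horizontal; since only two horizontal components are available, this both pins down the fibre that carries a vertical branching-three curve and shows, in the bi-cuspidal case, that $H_1$ and $H_2$ cannot both be vertical. Feeding this into the explicit description of $D$ I would single out, in each case, a fibre $F$ containing the cuspidal resolution chain $T_2$ (resp.\ the chain attached to a vertical $H_i$); since $C$ has at most two cusps by Lemma 3.1, only finitely many such fibres occur and the analysis is finite.

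Then I would extract $\theta$. In the chosen fibre $F$ the constraint of Lemma 1.3 --- all fibres of $f$ are irreducible when the base is $\p^1$, and at most one is reducible when the base is $\A^1$ --- forces $F\cap(\s-D-E)$ to be a single $\C^*$. Hence $F$ carries exactly one component $\theta'\not\subset D\cup E$, and, exactly as in the construction $\theta'=F-(T_2\cup[2,\dots,2])$ of Lemma 2.2, this $\theta'$ is a $(-1)$-curve attached to the tip of the cuspidal chain. Contracting the resolution of the cusp I would verify that the image $\theta=\pi(\theta')$ is a smooth rational curve meeting $C$ at the cusp, precisely along the lines of Corollary 2.3; in the uni-cuspidal case with $C'^2>0$ this is already Corollary 2.3 and serves as the template.

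The hard part will be the bi-cuspidal case together with the requirement that $\theta$ be a \emph{single} smooth curve through \emph{both} cusps lying in $S^0$. Since $H_1$ and $H_2$ cannot both be vertical, the two cuspidal resolution trees need not sit in one fibre, so I would have to show that they are nevertheless joined by a single component $\theta'\not\subset D\cup E$ --- running through the horizontal $H_i$ and the fibre carrying $C'$ --- whose image passes through both cusps. Checking that this $\theta'$ meets neither $E$ (so that $\theta\subset S^0$) nor the second section, and that contracting both cuspidal trees keeps $\theta$ smooth, is the delicate point; the uniqueness of the $\C^*$-fibration and the reducibility bound of Lemma 1.3 are the tools I would use to rule out the competing configurations.
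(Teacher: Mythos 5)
Your setup (Kawamata's fibration, verticality of $E$, untwistedness via Lemma 3.2, absence of base points via Lemma 3.3) matches the paper, and your uni-cuspidal extraction is close in spirit to the paper's, with one displacement: the paper works with the fiber $F_1=C'\cup l_1\cup M_1$ containing $C'$, not the fiber containing $T_2$, and it is the $(-1)$-curve $l_1$ whose image under the contraction of $D-C'$ is shown to be smooth. That verification is not automatic; it uses structural facts established first (the rivet is not a full fiber, so the base is $\p^1$; both sections have branching number three; $M_1$ is a chain of $(-2)$-curves and $M_2$ has type $[m,2,\dots,2]$ because $D-C'$ contracts to a smooth point). Note also that the paper's Case 1 makes no assumption $C'^2>0$, so you cannot simply defer to Corollary 2.3 there.

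The genuine gap is your bi-cuspidal case, exactly the part you flag as delicate and leave open. First, the conclusion available from the Lemma 3.1 argument is stronger than ``$H_1$ and $H_2$ cannot both be vertical'': if either $H_i$ were vertical, two of its adjacent components would have to be horizontal, exhausting both sections and forcing the other $H_j$ to be vertical with at most one horizontal neighbour, which is impossible; hence \emph{both} $H_1$ and $H_2$ are horizontal, and your worry about one of them being vertical never arises. Second, and decisively, your plan to exhibit a single fiber component $\theta'$ joining the two cuspidal trees whose image is \emph{already smooth} cannot work: the relevant component is the $(-1)$-curve $l$ in the fiber $F_1=B_{11}\cup l\cup B_{22}$, and when the cuspidal resolution trees are contracted its image is not smooth --- it is a rational \emph{uni-cuspidal} curve $L$ passing through both cusps of $C$. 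The paper's key move, absent from your proposal, is a reduction: since $l\cdot l=-1$, the proper transform $L'$ of $L$ under the minimal SNC resolution of its cusp satisfies $L'\cdot L'\geq 1$, so Corollary 2.3 (the descent of Lemma 2.2, which strictly decreases $n(\cdot)$) applies to $L$ and produces the smooth rational curve in $S^0$. Without this reduction to the uni-cuspidal case followed by descent, the smoothness you need at the end of your argument has no source.
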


\begin{proof}

Case 1. $C$ is uni-csupidal.\\

By Lemma 3.3, $f$ extends to a $\p^1$-fibration on $\s$. Note that $f$ has a rivet $R$. We can show that $H$ is horizontal to $f$. 
Since $H$ is the unique (-1)-curve in $D$, we see that $R$ is not a full fiber of $f$ since it does not contain
any (-1)-curve. This implies that the base of the $C^*$ fibration $f$ is $\p^1$. Because $H$ has branching number 
three, the other horizontal component of $D$, say $H_2$, must also have branching number three. As $T_2$ is a linear 
chain, $H_2$ must lie in $T_3$. One of the branches of $H_2$ is $R$. The 
other two branches will have to lie in singular fibers of $f$ that contain $C'$ and $T_2$. By Lemma 1.3, we can say that $F_1=C' \cup l_1 \cup M_1$ and $F_2=T_2 \cup l_2 \cup M_2$ are full fibers of $f$ 
where $l_1$ and $l_2$ are (-1)-curves not in $D$ and $M_1$ and $M_2$ are branches of $H_2$ other than $R$. The fibers 
$F_1$ and $F_2$ have unique (-1)-curves hence $M_1$ and $M_2$ are linear chains of rational curves. Because $C'$ is 
an irreducible component of $F_1$, $M_1$ is a linear chain of (-2)-curves and $M_2$ is forced to be of the type 
$[m,2,...,2]$ (a linear chain of rational curves with self-intersections $-m,-2,..,-2$) where the (-m) curve in $M_2$ 
meets $H_2$. This is because $D-C'$ blows down to a smooth point. 
Using this and by looking at the contraction of $D-C'$ to a smooth point and keeping track of the image of $l_1$ we see that 
$l_1$ is the proper transform of a smooth rational curve $\Gamma$ on $S$ such that
$S-\Gamma$ contains the singular points of $S$.\\

Case 2. $C$ is bi-cuspidal.\\

We use the description of $D$ from Lemma 3.2, Case 2.\\

We first show that that $H_1$ and $H_2$ are the horizontal components of $f$:  If $H_1$ is in a fiber of $f$, 
then at least one of the irreducible components of $D$, adjacent to $H_1$ should be a cross-section to $f$ as $H_1$ 
has branching number three in $D$. But this means that $H_1$ is a reduced (-1)-curve in the fiber it belongs to. Hence 
it cannot have branching number two in the fiber. This forces two components of $D$ adjacent to $H_1$ to be horizontal 
to $f$. Thus $H_2$ is in a fiber of $f$ and at most one component of $H_2$ is a cross-section to $f$. Again, this is 
seen to be impossible.\\

As $H_1$ and $H_2$ are horizontal to $f$, $B_{ij}$ are contained in fibers of $f$. We claim that at least three members of the set $\eta = (B_{ij} )$ are linear chains of rational curves. We denote the fiber containing $C'$ by $F_0$. We claim that if $F_{ij}$ is a fiber containing $B_{ij}$, then $F_{ij}$ has to contain one more element from $\eta$. If not then there are two elements of $\eta$ which are the only elements of $\eta$ in the respective fibers. Let $F_{ij}$ and $F_{i'j'}$ be two such fibers. Both have a component not in $D$ which meets $H_1$ or $H_2$. This shows that $F_{ij}$ and $F_{i'j'}$ are both reducible as fibers of the $\C^*$ fibration. This is not possible by Lemma 1.3.\\

Assume that $B_{11}$ and $B_{22}$ are contained in the same fiber which we denote by $F_1$. Let $F_2$ denote the fiber containing $B_{12}$ and $B_{21}$. If $F_0$ is not a full fiber then both $F_1$ and $F_2$ are irreducible as fibers of the $\C^*$ fibration. Thus both have a unique (-1) curve $l_1$ and $l_2$ in them. Also, $F_1-l_1$ and $F_2-l_2$ are unions of two connected components and each component has a reduced curve of the fiber. Hence by an observation of Palka, $F_1$ and $F_2$ are linear chains of rational curves. Thus every element of $\eta$ is linear.\\

Next we assume that $F_0$ is a full fiber of $f$. In this case we may assume that $F_2$ is not irreducible as a fiber of the $\C^*$-fibration. The fibration $f$ on 
$\s$ has two singular fibers: $F_1=B_{11} \cup B_{22} \cup l$ and $F_2=B_{12} \cup B_{21} \cup l_1 \cup l_2 \cup E$, 
here $l$ is a (-1)-curve not contained in $D$. We have the following possibilities for $l_1$ and $l_2$ both of which 
are outside $D$.

\begin{enumerate}
\item $l_1$ and $l_2$ are affine lines and they intersect in a connected component of the exceptional locus of the resolution of a cyclic quotient singularity.
\item $l_1$ is an affine line and $l_2$ is a $\C^*$.
\end{enumerate}

In the first case, If $l_1$ and $l_2$ are affine lines meeting $E$ which is a linear chain of rationa curves, it can be seen, again using the observation above, that $B_{12}$ and $B_{21}$ are linear. Also $B_{11}$ and $B_{22}$ are linear. Hence, all members of $\eta$ are linear.\\

In the second case, we claim that $l_1$ is a (-1) curve. If not, then $l_2$ is the unique (-1) curve in the fiber and by Palka's observation $F_2$ is linear, a contradiction. Let $l_1$ be attached to $B_{12}$. We blow down $l_1$ and subsequent (-1) curves, in this process we do not end up blowing down the whole of $B_{12}$, for that would give us a non-contractible twig of the image of $D$. This would imply that $\sk(\s-D) =-\infty$. Which is contrary to hypothesis. This shows that after blowing down $l_1$ and subsequent (-1) curves, the image of $l_2$ is a (-1) curve. Thus $B_{21}$ is a linear chain of rational curves. Again $B_{11}$ and $B_{22}$ are linear because $F_1$ as a fiber of the $\C^*$-fibration is irreducible. This shows that at least 3 elements of $\eta$ are linear.\\

Thus we may assume that $F_1$ is linear and $B_{21}$ is linear. Thus $B_{21} \cup H_2 \cup B_{22}$ is linear. Then it is easy to see that $l$ is the proper transform of a  rational uni-cuspidal curve $L$ under the resolution of the cusps of $C$. Further since $l \cdot l =-1$, if $L'$ denotes the proper transform of $L$ under the minimal SNC resolution of the cusp of $L$, then $L' \cdot L' \geq 1$. Hence by Corollary 2.3, there is a smooth rational curve in $S^0$.

\end{proof}

To show that $E$ is irreducible. It is enough to prove the following
\begin{lemma}
Let $(S,C)$ be a singular $\Q$-homology plane pair. If $C$ is smooth then $E$ is irreducible.
\end{lemma}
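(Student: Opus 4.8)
The plan is to attach to $D$ an $\A^1$-fibration, force $E$ into its fibres, and then combine the resulting fibre structure with the integral homology of $\s$ to leave room for only a single irreducible vertical configuration.

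First I would record the elementary geometry. Since $C\subset S^0$ is smooth, $\pi$ is an isomorphism near $C$, so $D=C'$ is a smooth rational curve disjoint from $E$, and $C'\cdot C'=C\cdot C$. Because $S$ is a $\Q$-homology $\p^2$ its smooth locus satisfies rational Poincar\'e duality and $[C]$ generates $H_2(S,\Q)\cong\Q$, on which the intersection form is positive; hence $C'\cdot C'>0$. As quotient singularities are rational, $b_1(\s)=0$ and $b_2(\s)=b_2(S)+r=1+r$, where $r$ is the number of components of $E$. Moreover $D$ has positive self-intersection and is orthogonal to the negative-definite block spanned by the components of $E$, so $D,E_1,\dots,E_r$ are linearly independent and therefore form a $\Q$-basis of $H_2(\s,\Q)$.

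Next I would build the fibration exactly as in Lemmas 2.1 and 2.2: blow up a tower of infinitely near points of a smooth point of $D$ until the proper transform $C'_h$ has self-intersection $0$, obtaining a $\p^1$-fibration $f\colon H\rar\p^1$ for which $C'_h$ is a full fibre and the $(-1)$-tip $L$ of the exceptional chain is a section. Since $E$ is disjoint from $C'_h$ it is vertical, so each connected component of $E$ lies in a single fibre of $f$. Deleting the boundary produces an $\A^1$-fibration on $\s-D-E$, hence on $S-C$, with the singular points of $S$ lying in fibres; in particular $\sk(S^0-C)=-\infty$ for this smooth $C$. Additivity of the Euler characteristic together with $e(S-C)=1$ (the complement being $\A^2$-acyclic) then gives $\sum_j(e(F_j)-1)=0$ over the base $\A^1$, and since every fibre of an $\A^1$-fibration has $e\ge 1$ with equality iff its reduced support is a single $\A^1$, every fibre is irreducible.

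The decisive step, which I expect to be the main obstacle, is to pass from ``every component of $E$ is a vertical curve'' to ``$E$ is a single curve''. Irreducibility of all fibres already prevents a fibre from splitting off several components, but the Euler characteristic cannot by itself bound the number $s$ of singular points, since a fibre carrying one cyclic singularity is still topologically $\A^1$. To get the bound I would use integral homology: from $0\rar H_2(\s-D-E,\Z)\rar H_2(\s,\Z)\xrightarrow{\phi}\Z^{1+r}\rar H_1(\s-D-E,\Z)\rar0$, where $\phi$ is intersection against the components of $D\cup E$, unimodularity of $H_2(\s,\Z)$ yields the discriminant identity $(C'\cdot C')\prod_i|H_1(L_i,\Z)|=m^2$ with $m=[H_2(\s,\Z):\langle D,E_i\rangle]$ and the links $L_i$ of the quotient singularities being rational homology spheres, together with $H_1(S-C,\Z)=\mathrm{coker}\bigl(\bigoplus_i H_1(L_i,\Z)\rar H_1(\s-D-E,\Z)\bigr)$. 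Feeding these relations into the structure theory of $\A^1$-fibrations on $\Q$-homology planes (the $\A^1$-analogue of Lemma 1.3), I expect exactly one multiple fibre, carrying a single cyclic quotient singularity whose Hirzebruch--Jung chain is forced to have length one, so that $E$ is irreducible. Verifying that the global numerics genuinely eliminate several singular points, rather than merely constraining their multiplicities, is the part that will require the most care.
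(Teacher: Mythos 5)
Your construction coincides with the paper's up to the halfway point: blow up a smooth point of $C$ and infinitely near points until the proper transform is a $0$-curve, take the resulting $\p^1$-fibration with the last exceptional $(-1)$-curve as a section, and use affineness of $S-C$ to force $E$ into fibers. The proof stops being a proof exactly where you yourself locate the ``decisive step'': you never show $E$ is irreducible, you only state that you \emph{expect} the discriminant identity plus the structure theory of $\A^1$-fibrations to force a single cyclic singularity whose Hirzebruch--Jung chain has length one. That expectation is not justifiable by the data you propose to feed in. Unimodularity of $H_2(\s,\Z)$, the orders $|H_1(L_i,\Z)|$ of the links, and $H_1(S-C,\Z)$ see only the determinants of the intersection matrices of the components of $E$ and the multiplicities of fibers; the determinant of a cyclic quotient singularity does not determine the length of its resolution chain. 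For instance $\frac{1}{5}(1,1)$, $\frac{1}{5}(1,2)$ and $\frac{1}{5}(1,4)$ have exceptional chains $[5]$, $[3,2]$ and $[2,2,2,2]$ of lengths $1$, $2$ and $4$, yet all three links have $H_1\cong\Z/5\Z$. So no identity among discriminants alone can force ``length one''; you need geometric information about how $E$ sits inside a degenerate fiber, and your Euler-characteristic count does not even show that all connected components of $E$ lie in the \emph{same} fiber.

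That geometric information is what the paper's proof supplies, and it is more elementary than the homological machinery you invoke. First, every singular fiber of the $\p^1$-fibration on $\s$ contains a $(-1)$-curve; a $(-1)$-curve in a fiber disjoint from $D$ would be either a complete curve of $\s-D$ not contained in $E$ (impossible, since $S-C$ is affine) or a $(-1)$-curve inside $E$ (impossible, by minimality of the resolution). Hence every singular fiber meets the vertical part of $D$, so the unique singular fiber is the fiber $F_0$ containing the $(-2)$-chain of $D$, and in particular all of $E$ lies in $F_0$. Second, inside $F_0$ the paper runs a contraction argument: each connected component $E_i$ of $E$ is joined to the $(-2)$-chain by a unique component $l_i$ of $F_0$ (tree structure), exactly one $l_i$ is a $(-1)$-curve, and contracting it and the subsequent $(-1)$-curves until the $(-2)$-curve $B$ meeting the section becomes a $(-1)$-curve shows that two or more $l_i$ would give the reduced curve $B$ branching number at least two in its fiber, a contradiction; a further contraction step upgrades ``$E$ connected'' to ``$E$ irreducible''. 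These two steps --- affineness forcing a unique singular fiber, and the blow-down analysis inside it --- are the missing ideas, and they cannot be replaced by the discriminant bookkeeping you propose.
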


\begin{proof}
 Recall that $S-C$ is affine. Hence $C^2 > 0$. We blow up a point on $C$ 
and infinitely near points until the proper transform of $C$ has self-intersection 0. Thus we have an $\A^1$-fibration 
$f$ on $\s-D$ where $D$ is the total transform of $C$ under the sequence of blow-ups. The fibration $f$ contains 
$E$ in its fibers. We know that the chain of (-2)-curves resulting from the blow ups lies in a fiber $F_0$ of $f$. 
We claim that $E$ is also contained in $F_0$. If some connected component of $E$ is not contained in $F_0$ then let 
$F_1$ be the fiber of $f$ that contains $E_1$, a connected component of $E$. The fiber $F_1$ has an irreducible component 
which meets the cross section of $f$ in $D$. This component is reduced, hence there is another irreducible component 
of $f$ which is a (-1)-curve and does not meet $D$. But $E$ has no (-1)-curves and $\s-D$ can have no complete curves 
except those in $E$. This is a contradiction. Hence $E$ is contained in $F_0$.\\

Next we claim that $E$ is irreducible. We know that $F_0 \cap D$ is a linear chain of (-2)-curves. Let $B$ denote 
the (-2)-curve which meets the cross section of the fibration. Let $\{E_i\}$ be the connected components of $E$ in $F_0$. 
Since $F_0$ is a tree of rational curves there is a unique irreducible curve  $l_i$ in $F_0$ that meets $E_i$ and 
$F_0 \cap D$ in one point each. Exactly one $l_i$ is a (-1)-curve because each $l_i$ intersects the linear chain of
(-2)-curves. We blow down this (-1)-curve and subsequent (-1)-curves till $B$ is mapped to a (-1)-curve under the 
blow downs. It is clear that if there are at least two $l_i$ then the branching number of $B$ is at least two. But $B$ is a 
reduced curve in the fiber, hence it meets only one other curve in $F_0$. This contradiction shows that there is exactly 
one $l_i$ and hence at most one connected component of $E$. To see that $E$ is irreducible we blow down the unique (-1)- 
curve in $F_0$ and subsequent (-1)-curves till $B$ is mapped to a (-1)-curve. As $B$ is reduced, there is another (-1)- 
curve in the image of $F_0$ which can only be the curve adjacent to the image of $B$. From this we see that $E$ is 
irreducible.\\

\end{proof}

To see that $S- \theta$ is $\Z$-homology plane we note that the unique (-1) curve in $F_0$ is the proper transform of a smooth rational curve which meets $C$ in one point. Thus the loop around $\theta$ in $S -\theta$ can be assumed to lie on an $\A^1$ in $\s$ which is topologically contractible. Thus the fundamental group of $\s- \theta$ is isomorphic to $\pi_1(\s)$ which is trivial because $\s$ is a rational surface (\cite{pradeep97} and \cite{gurjar97}). But the fundamental group of $S'$ and $\s$ are isomorphic as $E$ is a tree of rational curves. Hence $S-\theta$ is a $\Z$-homology plane.\\

This completes the proof of Corolary 1.2.\\

\section{Examples}

\subsection{An Example}
Consider a cubic $C$ on $\C\p^2$ with a single cusp. Let $T$ be the tangent line to $C$ at the cusp. Let $L$ be 
the tangent to $C$ at a smooth point of inflection. Let $p= T \cap L$, clearly $p$ is not on $C$. We blow up at $p$. 
We have a fibration with the proper transforms of $T$ and $L$ are fibers. Let $q$ be the point where the proper 
transform $L'$ of $L$ and $E$ meet. Here $E$ is the exceptional curve. We perform elementary transforms by blowing up 
at the point $q$ and blowing down the proper transform of $L$. One can see that  $\sk(\s-D)=1$ and $\sk(\s-D-E)=1$.

\bibliographystyle{plain}
\bibliography{biblio}

\begin{thebibliography}{10}

\bibitem{fujita82}
Takao Fujita.
\newblock On the topology of noncomplete algebraic surfaces.
\newblock {\em J. Fac. Sci. Univ. Tokyo Sect. IA Math.}, 29(3):503--566, 1982.

\bibitem{gurjar88}
R.~V. Gurjar and M.~Miyanishi.
\newblock Affine surfaces with {$\overline\kappa\leq 1$}.
\newblock In {\em Algebraic geometry and commutative algebra, {V}ol.\ {I}},
  pages 99--124. Kinokuniya, Tokyo, 1988.

\bibitem{gurjar95}
R.~V. Gurjar and A.~J. Parameswaran.
\newblock Affine lines on {${\bf Q}$}-homology planes.
\newblock {\em J. Math. Kyoto Univ.}, 35(1):63--77, 1995.

\bibitem{gurjar97}
R.~V. Gurjar, C.~R. Pradeep, and Anant.~R. Shastri.
\newblock On rationality of logarithmic {$\bold Q$}-homology planes. {II}.
\newblock {\em Osaka J. Math.}, 34(3):725--743, 1997.

\bibitem{iitaka82}
Shigeru Iitaka.
\newblock {\em Algebraic geometry}, volume~76 of {\em Graduate Texts in
  Mathematics}.
\newblock Springer-Verlag, New York-Berlin, 1982.
\newblock An introduction to birational geometry of algebraic varieties,
  North-Holland Mathematical Library, 24.

\bibitem{kollar08}
J{\'a}nos Koll{\'a}r.
\newblock Is there a topological {B}ogomolov-{M}iyaoka-{Y}au inequality?
\newblock {\em Pure Appl. Math. Q.}, 4(2, part 1):203--236, 2008.

\bibitem{miya91}
M.~Miyanishi and T.~Sugie.
\newblock Homology planes with quotient singularities.
\newblock {\em J. Math. Kyoto Univ.}, 31(3):755--788, 1991.

\bibitem{miyanishi92}
M.~Miyanishi and S.~Tsunoda.
\newblock Absence of the affine lines on the homology planes of general type.
\newblock {\em J. Math. Kyoto Univ.}, 32(3):443--450, 1992.

\bibitem{orevkov02}
S.~Yu. Orevkov.
\newblock On rational cuspidal curves. {I}. {S}harp estimate for degree via
  multiplicities.
\newblock {\em Math. Ann.}, 324(4):657--673, 2002.

\bibitem{pradeep97}
C.~R. Pradeep and Anant~R. Shastri.
\newblock On rationality of logarithmic {$\bold Q$}-homology planes. {I}.
\newblock {\em Osaka J. Math.}, 34(2):429--456, 1997.

\end{thebibliography}
\vspace{2em}

Sagar Kolte\\
Indian Institute of Technology, Mumbai.\\
sagar@math.iitb.ac.in\\

\end{document}